\newtheorem{thm}{Theorem}[section]
\newtheorem{la}[thm]{Lemma}
\newtheorem{Defn}[thm]{Definition}
\newtheorem{Remark}[thm]{Remark}
\newtheorem{Conj}[thm]{Conjecture}
\newtheorem{Example}[thm]{Example}
\newtheorem{Number}[thm]{\!\!}
\newenvironment{defn}{\begin{Defn}\rm}{\end{Defn}}
\newenvironment{rem}{\begin{Remark}\rm}{\end{Remark}}
\newenvironment{numba}{\begin{Number}\rm}{\end{Number}}
\newenvironment{proof}{{\noindent\bf Proof.}}%
                  {\nopagebreak\hspace*{\fill}$\Box$\medskip\par}
\newcommand{\Punkt}{\nopagebreak\hspace*{\fill}$\Box$}
\newcommand{\at}{\symbol{'100}}
\newcommand{\impl}{\Rightarrow}
\newcommand{\mto}{\mapsto}
\DeclareMathOperator{\Ad}{Ad}
\newcommand{\N}{{\mathbb N}}
\newcommand{\Q}{{\mathbb Q}}
\newcommand{\Z}{{\mathbb Z}}
\newcommand{\cg}{{\mathfrak g}}
\newcommand{\one}{{\bf 1}}
\DeclareMathOperator{\Aut}{Aut}
\newcommand{\sub}{\subseteq}
\DeclareMathOperator{\GL}{GL}
\DeclareMathOperator{\id}{id}
\DeclareMathOperator{\rad}{rad}
\begin{document}
%
%
%
\begin{center}
{\Large\bf The kernel of the adjoint\vspace{1mm}
representation of a {\boldmath$p$}-adic Lie group need not have
an abelian\\[2.5mm]
open normal subgroup}\\[7mm]
{\bf Helge Gl\"{o}ckner}\vspace{2mm}
\end{center}
\begin{abstract}
\hspace*{-7mm}
Let $G$ be a $p$-adic Lie group with Lie algebra
$\cg$ and
$\Ad\colon G\to\Aut(\cg)$ be the adjoint representation.
It was claimed in the literature
that the kernel $K:=\ker(\Ad)$
always has an abelian open normal subgroup.
We show by means of a counterexample
that this assertion is false;
it can even happen that $K=G$ but $G$ has no abelian subnormal
subgroup except for the trivial group.
The arguments are based on auxiliary results
on subgroups of free
products with central amalgamation.\vspace{2mm}
\end{abstract}
{\bf Classification:} Primary 22E20;
secondary 20D35,
20E06,
22E25,
22E50\\[3mm]
%
%
{\bf Key words:} $p$-adic Lie group, adjoint representation,
adjoint action, centre, normal subgroup, solubility,
radical, amalgamated product, direct limit\\[11mm]
{\bf\Large Introduction and statement of results}\\[4mm]
The context of these investigations are recent studies
of normal series
\[
G=G_0\rhd G_1\rhd \cdots \rhd G_n=\one
\]
of closed subgroups in a $p$-adic Lie group $G$.
Although composition series need not exist
(as is evident from
$\Z_p\rhd p\Z_p\rhd p^2\Z_p\rhd\cdots$),
by reasons of dimension one can always
find a series all of whose subquotients
$Q:=G_{j-1}/G_j$ are \emph{nearly simple}
in the sense that each closed subnormal
subgroup $S\sub Q$ is open or discrete~\cite{ELE}.
It turns out that a quite limited list
of subquotients suffices to build up~$G$
(\emph{loc.\,cit.}).
The kernel of the adjoint representation
is an important source of normal
subgroups,
and its properties
have an impact on the list of subquotients
needed to build up general $p$-adic Lie groups
(see the proposition below).
Some statements from the literature
are relevant in this connection.
In \cite[Exercise 9.12]{Dix},
the following assertion is made:\\[3mm]
{\bf Assertion 1.}
\emph{Let $G$ be a $p$-adic Lie group.
Then $G$ has closed normal subgroups $Z\sub K$
such that $Z$ is abelian, $K/Z$ is discrete,
and $G/K$ is isomorphic to a subgroup of $\GL_d(\Q_p)$
where $d=\dim(G)$.}\\[2.5mm]
The hint given in the exercise amounts to the following assertion:\\[3mm]
{\bf Assertion 2.}
\emph{If $G$ is a $p$-adic Lie group,
with Lie algebra $\cg$ and adjoint representation
$\Ad\colon G\to \GL(\cg)$,
then the kernel $\ker(\Ad)$ has an abelian open
subgroup which is normal in $G$.}\\[3mm]
In fact, the hint to solve the exercise
is to take $K:=\ker(\Ad)$
and define $G^+$ as the subgroup
generated by all compact open subgroups
of $G$ which are uniformly powerful.
This is an open normal subgroup of $G$ (and so $G/G^+$
and $K/(K\cap G^+)$ are discrete).
The reader is supposed to show that $K$ is the centralizer of $G^+$
in~$G$ (which is false), which would entail
that $K$ and $Z:=K\cap G^+$ can be used to obtain Assertion~1.\\[3mm]
The goal of this note is to show that
Assertions~1 and~2 are false,
and to record additional pathologies
that may occur. However, a weakened version
of Assertion~1 cannot be ruled out so far:\\[3mm]
{\bf Problem 1.}
Does every $p$-adic Lie group $G$ have
closed subgroups $Z\lhd K\lhd G$ such that $Z$ is abelian, $K/Z$ is discrete
and $K=\ker\beta$ for a continuous homomorphism $\beta\colon G\to \GL_d(\Q_p)$
for some $d\in \N$?\\[3mm]
Our counterexample to Assertions 1 and 2
reads as follows:\\[3mm]
{\bf Theorem.}
\emph{The ascending union}
\[
G:=\bigcup_{n\in \N_0} (\Q_p*_{\Z_p} \Q_p *_{p\Z_p}\cdots
*_{p^n\Z_p}\Q_p)
\]
\emph{of iterated amalgamated products
can be made a $1$-dimensional
$p$-adic Lie group
with the following properties}:
\begin{itemize}
\item[(a)]
\emph{$\Ad(g)=\id$ for all $g\in G$, i.e., $\ker(\Ad)=G$};
\item[(b)]
\emph{$G$ does not have an abelian non-trivial subnormal subgroup};
\item[(c)]
\emph{$G$ has a discrete normal subgroup $K$
such that $G/K\cong \Q_p$, whence
$K=\ker(\psi)$ for some analytic homomorphism
$\psi\colon G\to \GL_2(\Q_p)$}; \emph{and}:
\item[(d)]
\emph{The commutator group $G'$ is discrete}.
\end{itemize}
{\bf Remarks.}
\begin{itemize}
\item[(i)]
By (a) and (b), $\ker(\Ad)=G$ does not have an
abelian open normal subgroup, showing that
Assertion~2 is false.
\item[(ii)]
Our group $G$ also shows that Assertion~1 is false.\footnote{In fact, $d=\dim(G)=1$ here. Suppose that $K:=\ker\beta$
had an abelian open normal subgroup $Z$ for some
homomorphism $\beta\colon G\to \GL_1(\Q_p)$.
Since $G$ has no non-trivial abelian subnormal subgroups, we must have $Z=\one$,
whence $K$ would be discrete and thus $K\not=G$.
Since $\Z_p^\times$ (being pro-finite) and $\Z$ do not contain divisible elements
apart from the neutral element but $\Q_p$ is divisible,
every group homomorphism $\Q_p\to \GL_1(\Q_p)\cong \Q_p^\times \cong \Z\times \Z_p^\times$
is trivial. Since $G$ is generated by copies of $\Q_p$, we deduce that every
group homomorphism $G\to\GL_1(\Q_p)$ is trivial. Thus $K=G$ is non-discrete,
contradiction.}
\item[(iii)]
Our group $G$ does not provide a negative
answer to Problem 1,
as we can choose $K$ as in~(c)
and let $Z$ be the trivial group.
\item[(iv)]
As a consequence of (b),
neither $G$, nor any non-trivial subnormal
subgroup of $G$, is soluble.
\end{itemize}
Following \cite{ELE},
a nearly simple $p$-adic Lie group $H$
is called \emph{extraordinary}
if $H=\ker(\Ad)$ holds and $S'$ is non-discrete
for each open subnormal subgroup $S$ of $H$.
This implies that the radical $R(S)$ is discrete
for each open subnormal subgroup $S$ of $H$
\cite[Remark~3.3]{ELE}.\footnote{The notion
of radical used here is recalled in Section~\ref{preli}.}
It is not known whether extraordinary $p$-adic
Lie groups exist. In Section~\ref{prfprop}, we prove:\\[2.5mm]
{\bf Proposition.}
\emph{If Problem}~1 \emph{has a positive answer,
then extraordinary $p$-adic Lie groups
cannot exist.}\\[3mm]
Being $1$-dimensional,
the Lie group $G$ from the above theorem
is nearly simple. Also, $G=\ker\Ad$
and $R(S)=\one$ (which is discrete),
for each subnormal subgroup
$S$ of $G$.
Thus $G$ shares many properties of
extraordinary groups.
Yet, $G$ is not extraordinary, as $G'$ is discrete
(by (d) in the theorem).
%
%
%
%
\section{Preliminaries}\label{preli}
We write $\N=\{1,2,\ldots\}$ and
$\N_0:=\N\cup\{0\}$.
If $G$ is a group,
then $Z(G)$ denotes its centre.
As usual, we write $G'\sub G$ for the commutator
group of~$G$, generated by the commutators
$ghg^{-1}h^{-1}$ for $g,h\in G$
and define the derived series
of $G$ via $G^{(0)}:=G$, $G^{(n+1)}:=(G^{(n)})'$
for $n\in \N_0$.
We write $\one$ for the trivial group.
As usual, $G$ is called \emph{soluble}
if $G^{(n)}=\one$ for some $n\in \N_0$.
The \emph{radical} $R(G)$ of $G$
is defined as the union of
all soluble normal subgroups of~$G$.
Then $R(G)$ is a normal (and, in fact, characteristic)
subgroup of $G$.\\[2.3mm]
Let $A_1$ and $A_2$
be groups having isomorphic subgroups
$B_1\sub A_1$ and $B_2\sub A_2$,
respectively.
Let $\theta\colon B_1\to B_2$
be an isomorphism.
Then the amalgam $A_1*_\theta A_2$
can be defined (see \cite{Rot}).
We shall always assume
that $B:=B_1=B_2$ and $\theta=\id_B$
is the identity map.
As usual, we abbreviate
\[
A_1*_BA_2:=A_1*_{\id_B}A_2
\]
and call $A_1*_BA_2$ the \emph{free product
of $A_1$ and $A_2$ with amalgamated
subgroup $B$}
(or \emph{amalgamated product}, for short).
We are only interested in central amalgamations,
i.e., we shall always assume that $B\sub Z(A_1)$
and $B\sub Z(A_2)$.
For basic properties
of amalgamated products,
the reader is refered to \cite{Rot}.
In particular, we essentially use
\cite[Theorem 11.66]{Rot}
on the normal form for
elements of $A_1*_BA_2$.
See \cite{Bou} and \cite{Ser}
for basic theory and notation
concerning $p$-adic Lie groups.
We shall write $L(G)$
for the Lie algebra of a $p$-adic
Lie group $G$ and
$\rad(\cg)$ for
the radical of a finite-dimensional
$p$-adic Lie algebra~$\cg$
(the largest soluble ideal).
A $p$-adic Lie group $G$ is called
\emph{linear} if it admits an injective
continuous homomorphism $G\to \GL_d(\Q_p)$
for some $d\in \N$.
If $G$ is linear, then $L(R(G))=\rad(L(G))$
(\cite[Lemma 1.3]{ELE};
cf.\ \cite[Lemma 6.4]{Clu}).
\section{Auxiliary results on amalgamated products}
\begin{la}\label{ga000}
Let $A$ and $C$ be groups and $B$ be a
subgroup of both $Z(A)$ and $Z(C)$.
If $h\in A$ such that $h\not\in B$
and $g\in A*_B C$ such that $g\not\in A$,
then $ghg^{-1}\not\in A$
and $ghg^{-1}h^{-1}\not\in A$.
\end{la}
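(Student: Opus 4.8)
The plan is to argue entirely with the normal form theorem for amalgamated products (\cite[Theorem 11.66]{Rot}). First I would record the one structural fact that makes the central case pleasant: since $B\sub Z(A)$ and $B\sub Z(C)$ and $A*_BC$ is generated by $A\cup C$, the subgroup $B$ is central in $A*_BC$. Fix transversals of $B$ in $A$ and of $B$ in $C$, each containing the neutral element as the representative of $B$; then every element $w$ of $A*_BC$ has a unique reduced expression $w=b\,x_1\cdots x_m$ with $b\in B$ and nontrivial syllables $x_1,\dots,x_m$ lying alternately in $A\setminus B$ and $C\setminus B$. I shall call $m$ the length of $w$ and speak of the \emph{type} ($A$ or $C$) of each syllable. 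An element lies in $A$ if and only if its reduced form has length at most $1$ with its syllable, if any, of type $A$; in particular, \emph{any element whose reduced form contains a syllable of type $C$ lies outside $A$}. This is the criterion I will verify. Since $g\notin A$, its reduced form $g=b\,x_1\cdots x_n$ has $n\ge 1$ and at least one $C$-syllable, and since $h\in A\setminus B$ it is a single $A$-type syllable $h=b_h t_h$ with $t_h\notin B$.

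Next I would compute the reduced form of $ghg^{-1}$, pushing all $B$-factors to the front (they are central) and tracking only the syllables; here $g^{-1}$ carries the reversed syllable sequence $x_n^{-1},\dots,x_1^{-1}$, of the same lengths and types as $g$. I distinguish two cases according to the type of the last syllable $x_n$ of $g$. If $x_n$ has type $C$, then neither junction $x_n\mid t_h$ nor $t_h\mid x_n^{-1}$ merges (the types differ), so $ghg^{-1}$ reduces to $x_1\cdots x_n\,t_h\,x_n^{-1}\cdots x_1^{-1}$ of length $2n+1$, which displays the $C$-syllable $x_n$. If instead $x_n$ has type $A$, then necessarily $n\ge 2$ (otherwise $g\in A$) and $x_{n-1}$ has type $C$; now $x_n$ merges with $t_h$, and the resulting $A$-contribution may merge once more with $x_n^{-1}$. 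The outcome is a reduced word $x_1\cdots x_{n-1}\,s\,x_{n-1}^{-1}\cdots x_1^{-1}$ of length $2n-1$ for a single $A$-syllable $s$, again exhibiting the $C$-syllable $x_{n-1}$. Either way $ghg^{-1}$ has a $C$-syllable, so $ghg^{-1}\notin A$; moreover, in both cases that $C$-syllable occurs strictly before the final position.

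Finally, to pass to $ghg^{-1}h^{-1}$ I would invoke the elementary observation that right-multiplying a reduced word by one syllable (here $h^{-1}=t_h^{-1}b_h^{-1}$) can change its length by at most one and alters at most its last syllable, leaving all preceding syllables untouched. Since the $C$-syllable located above ($x_n$ in the first case, $x_{n-1}$ in the second) does not occupy the last position, it survives in the reduced form of $ghg^{-1}h^{-1}$, which therefore also lies outside $A$. The one delicate point---the step I expect to be the real obstacle---is the second case, where two successive merges take place: I must rule out a total collapse, i.e.\ show the doubly merged term $x_n t_h x_n^{-1}$ is not absorbed into $B$. This is exactly where the hypothesis $h\notin B$ enters: because $B$ is central in $A$, conjugation by $x_n\in A$ fixes $B$ setwise, so $x_n t_h x_n^{-1}\in B$ would force $t_h\in B$, contrary to assumption. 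Granting this, $s$ is a genuine nontrivial $A$-syllable and the length counts above hold, completing the argument.
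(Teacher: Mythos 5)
Your proof is correct, and it rests on the same tool as the paper's --- the normal form theorem for amalgamated products --- but it is organized quite differently. The paper argues by contradiction: assuming $ghg^{-1}\in A$, it writes $gh=ag$ with $a\in A$, computes the normal forms of $gh$ (three cases, according to whether the last syllable of $g$ lies in the $C$-transversal or merges with the representative of $h$) and of $ag$ (a further case split at the front), and checks pairwise that the resulting normal forms can never agree. You instead compute the reduced form of $ghg^{-1}$ directly and observe that a $C$-syllable of $g$ survives the conjugation; the only danger is a total collapse when the last syllable $x_n$ of $g$ lies in $A$, and you correctly identify that this is exactly where the hypothesis $h\notin B$ enters, via $x_n t_h x_n^{-1}\in B \Leftrightarrow t_h\in B$ (using that $B$ is central). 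Your version is more direct and replaces the paper's multi-case comparison of two normal forms by a single computation, at the modest cost of invoking the slightly stronger uniqueness statement that the length and type sequence of a reduced word are invariants of the group element. One place where you work harder than necessary: for the commutator you track the survival of a non-final $C$-syllable under right multiplication by $h^{-1}$, whereas the paper simply notes that $ghg^{-1}h^{-1}\in A$ together with $h\in A$ would force $ghg^{-1}\in A$, which has already been excluded; both observations are valid, but the latter is a one-liner.
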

\begin{proof}
We may assume that the sets $A\setminus B$ and $C\setminus B$
are disjoint.
Let $R\sub A$ be a transversal for $A/B$
and $S\sub C$ be a transversal for $C/B$.
Then $h=rb$ for unique $r\in R$ and $b\in B$.
Moreover,
\[
g=x_1\cdots x_nb'
\]
for some $n\in \N$,
$b'\in B$
and elements $x_1,\ldots, x_n\in R\cup S$
such that $x_i\in R\impl x_{i+1}\in S$
and $x_i\in S\impl x_{i+1}\in R$
for all $i\in \{1,\ldots,n-1\}$.
Since $g\not\in A$, we have $x_i\in S$
for some $i\in \{1, \ldots, n\}$.
We let $j$ be the minimum of all such $i$,
and $k$ be their maximum.
Then $j\in \{1,2\}$, $j\leq k$, $k\in \{n-1,n\}$
and both $x_j,x_k\in S$.

We show that $ghg^{-1}\not\in A$.
Then also $ghg^{-1}h^{-1}\not\in A$ (as $h\in A$).\\[2.3mm]
If $ghg^{-1}\not\in A$ was false,
we would have $gh=ag$ for some $a\in A$.
Note that
\[
gh=x_1\ldots x_n r bb'.
\]

If $k=n$, then
\begin{equation}\label{c1}
gh=x_1\ldots x_n rb_1
\end{equation}
is in normal form, with $b_1:=bb'\in B$.\\[2.3mm]

If $k=n-1$ and $x_nr\in B$, then
\begin{equation}\label{c2}
gh=x_1\ldots x_{n-1}b_2
\end{equation}
is in normal form,
with $b_2:=x_nrbb'\in B$.

If $k=n-1$ and $x_nr\not\in B$,
then $x_nrbb'=r'b_3$
for some $r'\in R$ and $b_3\in B$,
and thus
\begin{equation}\label{c3}
gh=x_1\ldots x_{n-1}r'b_3
\end{equation}
is in normal form.\\[2.3mm]
Suppose that $a\in B$.
We then deduce from
\[
gh=ag =ga
\]
(where we used that $B\sub Z(A*_B C)$)
that $h=a\in B$. But $h\not\in B$, contradiction.
We therefore must have $a\in A\setminus B$
and thus $a=r''b''$
with $r''\in R$ and $b''\in B$.

If $j=1$, this implies that
\begin{equation}\label{c4}
ag=r'' x_1\cdots x_n  b_4
\end{equation}
is in normal form,
with $b_4:= b' b''$.
Then $gh=ag$ is false (contradiction)
because the normal form (\ref{c4})
differs from the ones in (\ref{c1}), (\ref{c2})
and (\ref{c3})
(each starting with a representative $x_1\in S$,
as we assume $j=1$).

If $j=2$ and $r'' x_1\in B$, then
\begin{equation}\label{c5}
ag=x_2\cdots x_n b_5
\end{equation}
is in normal form,
with $b_5:=r'' x_1 b'b''\in B$.
Again we obtain $gh\not=ag$,
as the normal form in (\ref{c5})
differs from the ones in (\ref{c1})
and (\ref{c3}) (which involve $n+1$ and $n$ representatives,
respectively);
it also differs from the normal form in~(\ref{c2})
as the latter ends with a representative in $S$,
while the normal form in~(\ref{c5})
ends with a representative in $R$
in the situation of (\ref{c2}).

If $j=2$ and $r'' x_1\not\in B$,
then
\begin{equation}\label{ennew}
r'' x_1 b'b'' = r_1 b_6
\end{equation}
with $r_1\in R$
and $b_6\in B$, and thus
\begin{equation}\label{c6}
ag=r_1x_2\cdots x_n b_6
\end{equation}
is
in normal form.
Again $gh\not=ag$,
as the normal form in (\ref{c6})
differs from that in
(\ref{c1})
(which involves $n+1$ representatives)
and the one in (\ref{c2})
(which only involves $n-1$ representatives).
It also differs from the normal form in (\ref{c3}).
In fact, if we had $gh=ag$ in the situation of (\ref{c3})
and (\ref{c6}), then
\[
x_1\ldots x_{n-1}r'b_3=
r_1x_2\cdots x_n b_6
\]
would imply that $x_1=r_1$, $r'=x_n$ and $b_3=b_6$.
Thus $r'' x_1 b'b''=r_1 b_6=x_1 b_6$ (using (\ref{ennew})),
whence $b'b'' r'' x_1=b_6x_1$ and thus
$b'b'' r''=b_6$. Hence $r''=(b'b'')^{-1}b_6\in B$,
contrary to $r''\in R\sub A\setminus B$.
So, in all cases, $ghg^{-1}\not\in A$.
\end{proof}
\begin{numba}\label{nwsett}
Throughout the rest of this section,
we consider the following situation:
\begin{itemize}
\item[(a)]
$B_0\supseteq B_1\supseteq \cdots$ is a descending sequence
of groups with $\bigcap_{n\in \N_0}B_n=\one$;
\item[(b)]
$(H_n)_{n\in \N_0}$ is a sequence of groups
such that, for all $n\in \N_0$,
both $Z(H_n)$ and $Z(H_{n+1})$
contain $B_n$ as a subgroup; and
\item[(c)]
$B_n\not=H_n$ and
$B_n\not= H_{n+1}$,
for each $n\in \N_0$.
\end{itemize}
We define $G_0:=H_0$
and $G_n:=G_{n-1}*_{B_{n-1}}H_n$
for $n\in \N$. Then $G_0\sub G_1\sub\cdots$
and we define the group $G$ as the union (direct limit)
\[
G:=\bigcup_{n\in \N_0}G_n=\bigcup_{n\in \N_0}H_0*_{B_0}H_1
*_{B_1}\cdots *_{B_{n-1}}H_n.
\]
\end{numba}
\begin{defn}
A subgroup $H\sub G$ is called \emph{spread out}
if
\[
(\forall n\in \N)\quad H\not\sub G_n.
\]
For example, $G$ is spread out.
\end{defn}
\begin{la}\label{normalspread}
If a subgroup $H\sub G$ is spread
out, then every non-trivial normal subgroup
$N$ of $H$ is spread out.
\end{la}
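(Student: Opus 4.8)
The plan is to argue by contradiction. Suppose $N$ is a non-trivial normal subgroup of $H$ that is \emph{not} spread out, so that $N\sub G_n$ for some $n$. I would aim to deduce that $H\sub G_{n_0}$ for a suitable index $n_0$, which contradicts the assumption that $H$ is spread out. The engine of the argument is Lemma~\ref{ga000}: conjugating an element of some $G_{M-1}$ lying outside the amalgamating subgroup by an element of $G_M$ lying outside $G_{M-1}$ produces an element outside $G_{M-1}$.

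First I would fix a non-trivial $h\in N$ and locate a good level. Since $\bigcap_k B_k=\one$ by \ref{nwsett}(a) and $h\neq 1$, there is an index $k$ with $h\notin B_k$; as the $B_k$ descend, $h\notin B_{k'}$ for all $k'\geq k$. Setting $n_0:=\max(n,k)$ then gives both $h\in G_n\sub G_{n_0}$ and $h\notin B_{n_0}$. The task reduces to showing $g\in G_{n_0}$ for every $g\in H$.

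The key step combines normality with Lemma~\ref{ga000}. Fix $g\in H$ and suppose, for contradiction, that $g\notin G_{n_0}$. Let $M$ be the least index with $g\in G_M$; minimality together with $g\notin G_{n_0}$ forces $M\geq n_0+1$ and $g\notin G_{M-1}$. I would apply Lemma~\ref{ga000} to the central amalgam $G_M=G_{M-1}*_{B_{M-1}}H_M$ with $A:=G_{M-1}$, $C:=H_M$, $B:=B_{M-1}$. The hypotheses hold: $B_{M-1}\sub Z(H_M)$ by \ref{nwsett}(b), while $B_{M-1}\sub B_j\sub Z(H_j)$ for every $j\leq M-1$, so $B_{M-1}$ centralises all generators of $G_{M-1}$ and hence lies in $Z(G_{M-1})$. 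Since $h\in G_{M-1}$ with $h\notin B_{M-1}$ (because $B_{M-1}\sub B_{n_0}$) and $g\in G_M$ with $g\notin G_{M-1}$, the lemma yields $ghg^{-1}\notin G_{M-1}$, whence $ghg^{-1}\notin G_n$ as $G_n\sub G_{M-1}$. But $N$ is normal in $H$, so $ghg^{-1}\in N\sub G_n$, a contradiction. Therefore $g\in G_{n_0}$, and since $g\in H$ was arbitrary, $H\sub G_{n_0}$, contradicting that $H$ is spread out.

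The main obstacle I anticipate is choosing the level at which Lemma~\ref{ga000} is invoked. Because the amalgamating subgroups $B_n$ \emph{shrink} as $n$ grows, one cannot realise $G$ as a single central amalgam $G_{n_0}*_{B_{n_0}}(\cdots)$ over the convenient subgroup $B_{n_0}$: that subgroup need not be central in the tail $H_{n_0+1}*_{B_{n_0+1}}\cdots$. The remedy is to avoid working over $B_{n_0}$ and instead to apply the lemma at the \emph{top} stage $M$ where $g$ first appears, over the smaller subgroup $B_{M-1}$; here $h$ still lies outside $B_{M-1}$ precisely because $B_{M-1}\sub B_{n_0}$, and $G_n$ is safely contained in $G_{M-1}$. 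Once this level is identified correctly, the verification of the centrality hypotheses and the final contradiction are routine.
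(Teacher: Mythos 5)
Your argument is correct and is essentially the paper's proof run in contrapositive form: both hinge on the same application of Lemma~\ref{ga000} at the first level $M$ where an element $g\in H$ escapes $G_{M-1}$, after pushing the level high enough that the chosen $h\in N$ lies outside the amalgamating subgroup. Your explicit verification that $B_{M-1}\sub Z(G_{M-1})$ (via $B_{M-1}\sub B_j\sub Z(H_j)$ for $j\le M-1$) is a detail the paper leaves implicit, but otherwise the two proofs coincide.
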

\begin{proof}
Let $k\in \N_0$. We show that $N$
is not a subset of $G_k$.
Let $e\not=h\in N$
and $m\in \N_0$ such that $h\in G_m$.
After increasing $m$ if necessary, we may assume
that $m\geq k$.
Since $\bigcap_{n\in \N_0}B_n=\one$,
after increasing $m$ further we may assume
that $h\not\in B_m$.
Since $H$ is spread out,
there exists $g\in H$
such that
$g\not \in G_m$.
There exists $\ell>m$
such that $g\in G_\ell$.
We choose $\ell$ minimal.
Then $g\not\in G_{\ell-1}$.
After increasing $m$,
we may assume that $m=\ell-1$.
Thus $g\in G_{m+1}$ but $g\not\in G_m$.
Now $ghg^{-1}\in N$ as $N$ is a normal
subgroup of~$H$.
Moreover, applying Lemma~\ref{ga000}
to the subgroup $G_{m+1}=G_m*_{B_m}H_{m+1}$
of~$G$, we see that $ghg^{-1}\not\in G_m$.
Since $m\geq k$, we deduce that $ghg^{-1}\not\in G_k$.
Thus $N$ is spread out, which completes the proof.
\end{proof}
\begin{rem}\label{sglout}
For later use, note
that also $ghg^{-1}h^{-1}\not\in G_m$
in the preceding proof
(since $ghg^{-1}\not\in G_m$ and $h\in G_m$).
Thus $ghg^{-1}h^{-1}\not\in G_k$
(as $m\geq k$).
\end{rem}
A trivial induction based on
Lemma~\ref{normalspread} shows:
\begin{la}\label{subnspread}
If a subgroup $H$ of $G$ is spread out,
then also every non-trivial subnormal subgroup
$S$ of $H$ is spread out.
In particular, every non-trivial subnormal
subgroup $S$ of $G$ is spread out.\,\Punkt 
\end{la}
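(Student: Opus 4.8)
The plan is to argue by induction on the length $k$ of a subnormal series
\[
S = S_0 \lhd S_1 \lhd \cdots \lhd S_k = H
\]
witnessing that $S$ is subnormal in $H$, where throughout $S$ is assumed non-trivial and $H$ is assumed spread out. For the base case $k=1$, the subgroup $S$ is simply a non-trivial normal subgroup of the spread-out group $H$, so Lemma~\ref{normalspread} applies directly and yields that $S$ is spread out (the degenerate case $k=0$, i.e.\ $S=H$, is immediate from the hypothesis).

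For the inductive step, suppose the claim holds for all chains of length $k$, and let $S$ be given by a chain $S = S_0 \lhd S_1 \lhd \cdots \lhd S_{k+1} = H$ of length $k+1$. First I would observe that $S_1$ is itself subnormal in $H$ via the truncated chain $S_1 \lhd S_2 \lhd \cdots \lhd S_{k+1} = H$, which has length $k$. The one point that must be checked is that the non-triviality hypothesis propagates up the chain: since $S = S_0 \sub S_1$ and $S \neq \one$, the group $S_1$ is non-trivial, so the inductive hypothesis applies to $S_1$ and shows that $S_1$ is spread out. Now $S = S_0$ is a non-trivial normal subgroup of the spread-out group $S_1$, and a single further application of Lemma~\ref{normalspread} gives that $S$ is spread out, completing the induction.

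The final ``in particular'' assertion then follows at once by taking $H := G$, which is spread out by the Definition preceding Lemma~\ref{normalspread}. I do not anticipate any genuine obstacle here; the only item requiring (minimal) care is the verification that each intermediate subgroup such as $S_1$ remains non-trivial, so that Lemma~\ref{normalspread} is legitimately applicable at every stage of the induction, but this is immediate from the inclusion $S \sub S_1$ together with $S \neq \one$.
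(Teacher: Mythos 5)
Your proof is correct and is precisely the ``trivial induction based on Lemma~\ref{normalspread}'' that the paper invokes without writing out: induct on the length of the subnormal chain, note that non-triviality of $S$ forces non-triviality of each intermediate term since $S\sub S_1\sub\cdots$, and apply Lemma~\ref{normalspread} at each step. Nothing to add.
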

\begin{la}\label{spreadna}
If a subgroup $H\sub G$ is spread out,
then $H'\not=\one$ and thus $H$ is not abelian.
\end{la}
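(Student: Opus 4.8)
The plan is to reuse the commutator construction already carried out in the proof of Lemma~\ref{normalspread}, this time extracting a single non-trivial commutator of two elements of~$H$. First I would note that a spread-out subgroup is automatically non-trivial: if $H=\one$ then $H\sub G_0$, contradicting spread-outness. So I may pick some $e\not=h\in H$ and choose $m\in\N_0$ with $h\in G_m$. Then, exactly as in the proof of Lemma~\ref{normalspread} and invoking $\bigcap_n B_n=\one$, I enlarge $m$ so that in addition $h\not\in B_m$; enlarging $m$ further is harmless, since $G_n$ is ascending (so $h$ stays in $G_m$) and $B_n$ is descending (so $h$ stays outside $B_m$, as $h\in B_{m'}\sub B_m$ would contradict $h\not\in B_m$).

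Next, since $H$ is spread out, there is $g\in H$ with $g\not\in G_m$; taking $\ell$ minimal with $g\in G_\ell$ gives $g\in G_\ell\setminus G_{\ell-1}$ with $\ell>m$, and after replacing $m$ by $\ell-1$ I may assume $g\in G_{m+1}\setminus G_m$ while still $h\in G_m\setminus B_m$. At this point I would apply Lemma~\ref{ga000} to the amalgam $G_{m+1}=G_m*_{B_m}H_{m+1}$, taking $A:=G_m$, $B:=B_m$ and $C:=H_{m+1}$: the hypotheses $h\in A\setminus B$ and $g\in (A*_B C)\setminus A$ are precisely what has been arranged, so the lemma yields $ghg^{-1}h^{-1}\not\in G_m$.

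Finally, since $e\in G_m$, the commutator $ghg^{-1}h^{-1}$ is non-trivial; as $g,h\in H$ it lies in $H'$, whence $H'\not=\one$ and $H$ is not abelian. This is essentially the observation recorded in Remark~\ref{sglout}, so I expect no genuine obstacle: all the real work is encapsulated in Lemma~\ref{ga000} and its normal-form bookkeeping for amalgamated products. The only point requiring care is the administration of the index~$m$, namely arranging the successive enlargements so that they simultaneously keep $h$ inside $G_m$ but outside $B_m$ and push $g$ up to lie exactly one level higher, in $G_{m+1}\setminus G_m$, so that Lemma~\ref{ga000} applies to the single amalgamation $G_m*_{B_m}H_{m+1}$.
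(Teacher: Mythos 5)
Your proposal is correct and follows essentially the same route as the paper: the paper's proof of Lemma~\ref{spreadna} likewise takes $N=H$ in the proof of Lemma~\ref{normalspread} and invokes Remark~\ref{sglout} to produce a commutator $ghg^{-1}h^{-1}\not\in G_k$ with $g,h\in H$. Your careful bookkeeping of the index $m$ is exactly the argument already carried out there, so nothing further is needed.
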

\begin{proof}
Let $k\in \N_0$.
Taking $N=H$ in the proof
of Lemma~\ref{normalspread},
we obtain $g,h\in H$
such that $ghg^{-1}h^{-1}\not\in G_k$
(see Remark~\ref{sglout}).
Since $ghg^{-1}h^{-1}\in H'$,
we see that $H'$ is spread out
and thus $H'\not=\one$.
\end{proof}
\begin{la}\label{nicenc}
Every non-trivial
subnormal subgroup $S$ of $G$
is non-abelian
and has trivial radical,
$R(S)=\one$.
\end{la}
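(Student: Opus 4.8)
The plan is to deduce both assertions from the earlier lemmas, reducing the radical statement to the abelian case. The non-abelian claim is immediate: since $S$ is a non-trivial subnormal subgroup of $G$, Lemma~\ref{subnspread} shows that $S$ is spread out, and then Lemma~\ref{spreadna} gives $S'\not=\one$, so $S$ is not abelian.

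For the radical, I would argue by contradiction and assume $R(S)\not=\one$. Since $R(S)$ is by definition the union of all soluble normal subgroups of $S$, a non-trivial $R(S)$ forces the existence of a non-trivial soluble normal subgroup $N\lhd S$. I would then pass to the derived series $N=N^{(0)}\supseteq N^{(1)}\supseteq\cdots$ and let $m\in\N_0$ be the largest index with $N^{(m)}\not=\one$; such an $m$ exists because $N$ is soluble and non-trivial. By maximality of $m$ we have $(N^{(m)})'=N^{(m+1)}=\one$, so $N^{(m)}$ is a non-trivial \emph{abelian} group.

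The crucial bookkeeping step is to verify that $N^{(m)}$ is subnormal in $G$. First, $N$ itself is subnormal in $G$: it is normal in $S$, and $S$ is subnormal in $G$ by hypothesis, so prepending $N\lhd S$ to a subnormal chain from $S$ up to $G$ yields a subnormal chain $N\lhd S\lhd\cdots\lhd G$. Now $N^{(m)}$ is characteristic in $N$, and since $N$ is normal in the next term of this chain, the characteristic subgroup $N^{(m)}$ is normal in that term as well; appending the remainder of the chain then exhibits $N^{(m)}$ as subnormal in $G$. I expect this promotion step to require the most care, as one must correctly invoke that a characteristic subgroup of a normal subgroup is normal in the larger group and then splice the chains together without error.

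Finally I would invoke the earlier results once more: by Lemma~\ref{subnspread} the non-trivial subnormal subgroup $N^{(m)}$ of $G$ is spread out, and then Lemma~\ref{spreadna} forces $N^{(m)}$ to be non-abelian. This contradicts the fact that $N^{(m)}$ is abelian, and hence $R(S)=\one$, as desired.
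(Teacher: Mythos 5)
Your proof is correct and follows essentially the same route as the paper: both parts rest on Lemmas~\ref{subnspread} and~\ref{spreadna}, and the radical statement is handled via the derived series of a normal subgroup of $S$ together with the observation that its terms remain subnormal in $G$. The only cosmetic difference is direction: the paper argues forward that no non-trivial subnormal subgroup is soluble (its derived series never reaches $\one$), whereas you argue by contradiction from the last non-trivial, hence abelian, term of the derived series.
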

\begin{proof}
Being subnormal and non-trivial, $S$ is spread out
(see Lemma~\ref{subnspread})
and hence non-abelian, by Lemma~\ref{spreadna}.
Let $S=S^{(0)}\supseteq S^{(1)}\supseteq S^{(2)}\supseteq \cdots$
be the derives series of $S$.
Then $S^{(n)}$ is a subnormal subgroup of $G$,
for each $n\in \N_0$.
We have $S^{(0)}=S\not=\one$.
If $S^{(n)}\not=\one$,
then $S^{(n)}$ is spread out (by Lemma~\ref{subnspread}),
whence $S^{(n+1)}=(S^{(n)})'\not=\one$
by Lemma~\ref{spreadna}.
Hence $S^{(n)}\not=\one$ for all $n\in \N_0$
and thus $S$ is not soluble.
If $N$ is a non-trivial normal subgroup
of $S$, then also $N$ is subnormal in $G$
and hence $N$ is not soluble,
by the preceding.
Hence $\one$ is the only soluble normal
subgroup of $S$ and hence $R(S)=\one$.
\end{proof}
\section{Proof of the theorem}
We now prove the theorem stated in the Introduction.\\[2.3mm]
First note that $G_0:=\Q_p$ is a $p$-adic Lie group
with $p^{-1}\Z_p$ as a central open subgroup.
If $n\in \N$ and we already know that $G_{n-1}$ is a $p$-adic
Lie group with $p^{n-2}\Z_p$ as a central open subgroup,
then $G_n:=G_{n-1}*_{p^{n-1}\Z_p}\Q_p$
has $p^{n-1}\Z_p$ as a central subgroup.
Now Proposition~18 in \cite[Chapter III, \S1, no.\,9]{Bou}
shows that $G_n$
admits a unique $p$-adic Lie group structure
turning $p^{n-1}\Z_p$ into an open Lie subgroup
of $G_n$. Then $G_{n-1}$ is an open Lie subgroup of $G_n$
and thus
$G:=\bigcup_{n\in \N_0}G_n$
admits a unique Lie group structure
making each $G_n$ an open Lie subgroup.

(a) If $g\in G$, then $g\in G_n$ for some $n\in \N_0$.
Since $p^{n-1}\Z_p \sub Z(G_n)$,
the centre $Z(G_n)$ is open in $G_n$
and thus $\Ad(g)=\id$ (as it does not matter
for the calculation of $\Ad(g)$
if we consider $g$ as an element of $G$,
or as an element of the open subgroup $G_n\sub G$).

(b) is a special case of Lemma~\ref{nicenc}.

(c) To construct a group homomorphism $\phi\colon G\to\Q_p$,
we start with $\phi_0:=\id_{\Q_p}\colon G_0=\Q_p\to \Q_p$, $z\mto z$.
If $n\in \N$ and a homomorphism
$\phi_{n-1}\colon G_{n-1}\to\Q_p$
has already been constructed
with $\phi_{n-1}(z)=z$ for all $z\in p^{n-2}\Z_p$,
then the universal
property of the amalgamated product
provides a unique group homomorphism
\begin{equation}\label{prodc}
\phi_n\colon G_n=G_{n-1}*_{p^{n-1}\Z_p}\Q_p\to \Q_p
\end{equation}
such that
\begin{equation}\label{juxta}
\phi_n|_{G_{n-1}}=\phi_{n-1}
\end{equation}
and $\phi_n|_{\Q_p}=\id_{\Q_p}\colon \Q_p\to\Q_p$
(considered as a map from the second factor
in (\ref{prodc}) to $\Q_p$).
By (\ref{juxta}),
we obtain a well-defined group homomorphism
$\phi\colon G\to\Q_p$ via $\phi(x):=\phi_n(x)$
if $x\in G_n$. Then $\phi|_{G_n}=\phi_n$
for each $n$.
In particular, restricting $\phi$ to
$G_0=\Q_p$, we obtain
$\phi|_{\Q_p}=\phi_0=\id_{\Q_p}$.
Since $G_0$ is open in $G$ and $\id_{\Q_p}$
an analytic map, we deduce that $\phi$
is analytic and $L(\phi)=T_e\id_{\Q_p}=\id_{\Q_p}$.
Thus $\phi$ is \'{e}tale and surjective,
whence $K:=\ker\phi$ is discrete
and $G/K\cong \phi(G)=\Q_p$.
To get $\psi$, compose $\phi$ with the
embedding
$\Q_p\to\GL_2(\Q_p)$, $z\mto${\footnotesize$\left(
\begin{array}{cc}
1& z\\
0 & 1 
\end{array}
\right)$}.

(d) Since $G/K$ is abelian in (c), we have $G'\sub K$.
As $K$ is discrete, discreteness of $G'$ follows.\,\Punkt
\section{Proof of the proposition}\label{prfprop}
We now prove the proposition stated in the Introduction.\\[2.3mm]
To reach a contradiction, suppose that
an extraordinary $p$-adic Lie group $H$ exists.
If Problem~1 has a positive answer, then we can find a normal subgroup
$K\sub H$ such that $H/K$ is linear and
there exists an abelian, normal subgroup $Z\sub K$
with $K/Z$ discrete. Then $Z$ is open in $H$ or discrete.
If $Z$ is open, then $Z$ is an abelian open subnormal
subgroup of $H$; but $Z'$ must be non-discrete as $H$
is extraordinary (contradiction).
If $Z$ is discrete, then also $K$ is discrete.
Because $H/K$ is linear, we have $L(R(H/K))=\rad(L(H/K))$
(see \cite[Lemma 1.3]{ELE};
cf.\ \cite[Lemma 6.4]{Clu})
and thus $L(R(H/K))=L(H/K)$,
since $L(H/K)$ (like $L(H)$) is abelian.
Hence
$R(H/K)$ contains a soluble closed normal
subgroup $S$ of $H/K$
(see \cite[Lemma 1.2]{ELE}).
Let $q\colon H\to H/K$
be the quotient morphism. Then $T:=q^{-1}(S)$ is an
open subnormal subgroup of~$H$
and
$T^{(k)}$ is discrete for large $k$
(because $T^{(k)}\sub K$ for large $k$,
in view of $q(T^{(k)})=S^{(k)}$).
Choose $k$ minimal with $T^{(k)}$ discrete.
Then $N:=T^{(k-1)}$ is an open subnormal subgroup
of $H$ such that $N'$ is discrete,
contradicting the hypothesis that~$H$ is extraordinary.\,\vspace{-2mm}\Punkt
{\small
{\bf Helge  Gl\"{o}ckner}, Universit\"at Paderborn, Institut f\"{u}r Mathematik,\\
Warburger Str.\ 100, 33098 Paderborn, Germany\\[1mm]
e-mail: {\tt  glockner\at{}math.upb.de}}\vfill
\end{document}